\documentclass[12pt,a4]{article}

\usepackage{amsmath,amssymb,amsthm}

\textwidth=150 true mm
\textheight=225  true mm
\topmargin= - 7 true mm
\oddsidemargin=5 true mm

\newtheorem{theorem}{Theorem}

\newtheorem{coro}{Corollary}
\newtheorem{remark}{Remark}
\newcommand{\E}{{\bf E\hskip 0.3 mm}}
\renewcommand{\P}{{\bf P\hskip 0.3 mm}}

\title{New and refined bounds for expected maxima of fractional Brownian motion}

\author{Konstantin Borovkov$^1$, Yuliya Mishura$^2$,\\ Alexander Novikov$^3$
and Mikhail Zhitlukhin$^4$}

\date{}

\begin{document}
\maketitle

\footnotetext[1]{School of Mathematics and Statistics, The University of Melbourne, Parkville 3010, Australia; e-mail: borovkov@unimelb.edu.au.}

\footnotetext[2]{Mechanics and Mathematics
Faculty, Taras Shevchenko National University of Kyiv, Volodymyrska str.~64, 01601 Kyiv, Ukraine; email: myus@univ.kiev.ua.}

\footnotetext[3]{School of Mathematical and Physical Sciences, University of Technology Sydney, PO Box 123, Broadway,
Sydney, NSW 2007,   Australia; email: Alex.Novikov@uts.edu.au.}

\footnotetext[4]{Steklov Mathematical Institute of Russian Academy of Sciences, Gubkina str.~8,
119991, Moscow, Russia; email: mikhailzh@mi.ras.ru.}

\begin{abstract}
For the fractional Brownian motion $B^H$ with the Hurst parameter value $H$ in (0,1/2), we derive new upper and lower bounds for the difference between the expectations of the maximum of $B^H$ over [0,1] and  the maximum of $B^H$ over the discrete set of values $ in^{-1},$ $i=1,\ldots, n.$ We use these results to improve our earlier upper bounds for the expectation of the maximum of $B^H$ over $[0,1]$ and derive new upper bounds for Pickands' constant.

\smallskip
{\it Key words and phrases:} fractional Brownian motion, convergence rate, discrete time approximation, Pickands' constant.

\smallskip
{\em AMS Subject Classification:} 	60G22, 60G15, 60E15.
\end{abstract}

\section{Introduction}

Let $B^H=(B_t^H)_{t\ge 0}$ be a fractional Brownian motion (fBm) process with Hurst
parameter $H\in(0,1)$, i.e.\ a zero-mean continuous Gaussian process with the
covariance function
$\E  B_s^H B_t^H  = \frac12 (s^{2H} + t^{2H} - |s-t|^{2H}), $ $ s,t \ge 0.$
Equivalently, the last condition can be stated as $ B^H_0=0$ and
\begin{equation}
 \E ( B_s^H - B_t^H)^2= |s-t| ^{2H},\quad s,t \ge 0 .
 \label{L2}
\end{equation}
Recall that the Hurst parameter $H$ characterizes the type   of the  dependence of the increments of the fBm. For $H\in (0,\frac12)$ and $H\in (\frac12, 1)$, the increments of $B^{H}$ are  respectively  negatively and positively correlated, whereas the process $B^{1/2}$ is the standard Brownian motion which  has independent increments. The fBm processes are important construction blocks in various application areas, the ones with $H>\frac12$ being of interest as their increments exhibit long-range dependence, while it was shown recently that fBm's with $H<\frac12$   can  be well fitted to real life telecommunications, financial markets with stochastic volatility and other financial data (see, e.g., \cite{ArGl,BaFrGa}). For detailed exposition of the theory of   fBm processes, we refer the reader to \cite{BiHuOkZh, Mis, Nou} and references therein.

Computing the value of the expected maximum
 \[
 M^H:=\E \max_{0\le t \le 1} B_t^H
 \]
is an important question arising in a number of applied problems, such as finding the likely magnitude of the strongest earthquake to occur this century in a given region  or the speed of the strongest wind gust a tall building has to withstand during its lifetime etc. For the standard Brownian motion $B^{1/2}$, the exact value
of the expected maximum is $\sqrt{\pi/2}$, whereas for all other $H\in (0,1)$ no closed-form
expressions for the expectation  are  known. In the absence of such results, one standard approach to computing $M^H$ is to evaluate instead its approximation
$$
 M^H_n:=\E \max_{1\le i \le n} B_{i/n}^H, \quad  n\ge 1,
$$
(which can, for instance,  be done using simulations) together with the approximation error
\[
\Delta_n^H :=  M^H - M^H_n.
\]

Some bounds for $\Delta_n^H$ were recently established  in~\cite{BMNZ}.
The main result of the present note is an improvement of the following upper bound for $\Delta_n^H$ obtained in Theorem~3.1 of   ~\cite{BMNZ}: for $n\ge 2^{1/H},$
\begin{equation}\label{old}
\Delta_n^H \le \frac{2 (\ln n)^{1/2}}{n^{H}}\biggl(1+\frac4{n^H}+\frac{0.0074}{(\ln n)^{3/2}}\biggr).
\end{equation}
Lower bound for $\Delta_n^H$ is obtained as well and we study for which $H$ and $n$ upper and lower bounds  hold simultaneously. We also obtain a new upper bound for the expected maximum
$M^H$ itself and some functions of it, which refines previously known results (see e.g.\ \cite{BMNZ,S}), and
use it to derive an improved upper bound for the so-called Pickands' constant, which is the basic constant in the extreme value theory of Gaussian
processes.

The paper is organized as follows: Section \ref{main} contains the results, with comments and examples, and Section \ref{proofs} contains the proofs.

\section{Main results}\label{main} From now on, we always    assume that $H\in(0,\frac12)$. The next theorem is the main result of the note. As usual,   $\lfloor x\rfloor $
and $\lceil x\rceil$ denote\ the floor and the ceiling of the real number~$x$.

\begin{theorem}\label{Thm1}
1) For any $\alpha>0$ and $n\ge 2^{1/\alpha} \vee
(1+\frac{\alpha}{1+\alpha})^{1/(2\alpha H)}$  one has
\begin{equation}
\frac{\Delta_n^H}{n^{-H}(\ln n)^{1/2}}
\le
\frac{(1-\lfloor n^\alpha \rfloor^{-1})^H (1+\alpha)^{1/2}}{1-\lfloor n^\alpha \rfloor^{-H}( 1+ \alpha/(1+\alpha))^{1/2}}.
\label{2}
\end{equation}

2) For any $n\ge 2$ one has
\begin{equation}\label{2+}
\frac{\Delta_n^H}{n^{-H}(\ln n)^{1/2}} \ge n^{H}\biggl(
\frac{L}{(\ln n^{H})^{1/2}} - 1\biggr)^+,\end{equation}
where $ L=1/\sqrt{4 \pi e \ln 2} \approx 0.2$ and $a^+=a\vee 0.$

\end{theorem}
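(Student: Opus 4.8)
The plan is to recast both inequalities as sharp estimates for expected maxima of Gaussian families extracted from $B^H$, exploiting the isometry $\|B^H_s-B^H_t\|_{L^2}=|s-t|^H$ coming from \eqref{L2}, and to control these maxima by Slepian--Sudakov--Fernique comparison against families of \emph{independent} Gaussians. The decisive point in every step is to obtain the correct constant $1$ (rather than the crude $\sqrt2$) in front of $(\ln\cdot)^{1/2}$; this is exactly what lets the bounds of Theorem~\ref{Thm1} improve on \eqref{old}. I would treat part 2) first, as it is the cleaner of the two.

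For part 2) I would split $\Delta_n^H=M^H-M_n^H$ and bound the two terms separately. For the upper bound on $M_n^H$, note that the $L^2$-diameter of $\{B^H_{i/n}\}_{i=1}^n$ equals $(1-1/n)^H\le1$, so comparing with i.i.d.\ $N(0,\tfrac12(1-1/n)^{2H})$ variables (whose pairwise squared distances dominate those of the $B^H_{i/n}$) and using $\E\max_{i\le n}Z_i\le(2\ln n)^{1/2}$ for standard normals $Z_i$ gives $M_n^H\le(1-1/n)^H(\ln n)^{1/2}\le(\ln n)^{1/2}$; the division by $\sqrt2$ in the comparison variance is what removes the factor $\sqrt2$. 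For the lower bound on $M^H$, I would apply the comparison in the reverse direction to $\{B^H_{i/N}\}_{i=1}^N$: since the \emph{minimal} pairwise squared distance is $N^{-2H}$, one gets $M^H\ge\E\max_{i\le N}B^H_{i/N}\ge\frac{N^{-H}}{\sqrt2}\,\E\max_{i\le N}Z_i$, and then a sharp lower bound $\E\max_{i\le N}Z_i\ge\frac1{\sqrt\pi}(\log_2N)^{1/2}$ (which is an equality at $N=2$) yields $M^H\ge\frac{N^{-H}}{\sqrt{2\pi}}(\log_2N)^{1/2}$. Optimizing the free integer $N$ (the optimum sits near $\ln N=1/(2H)$) gives precisely $M^H\ge L/\sqrt H$ with $L=1/\sqrt{4\pi e\ln2}$. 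Combining $M^H\ge L/\sqrt H$ with $M_n^H\le(\ln n)^{1/2}$ gives $\Delta_n^H\ge L/\sqrt H-(\ln n)^{1/2}$; since $\Delta_n^H\ge0$ one may take the positive part, and dividing through by $n^{-H}(\ln n)^{1/2}$ and writing $\ln n^H=H\ln n$ turns this into \eqref{2+}.

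For part 1) I would use a telescoping refinement of the grid. Set $m=\lfloor n^\alpha\rfloor$ and consider the nested grids of mesh sizes $nm^k$, $k\ge0$; since $M_{nm^k}^H\uparrow M^H$, we have $\Delta_n^H=\sum_{k\ge0}\bigl(M_{nm^{k+1}}^H-M_{nm^k}^H\bigr)$. The engine of the proof is a single per-step estimate,
\begin{equation}
M_{Nm}^H-M_N^H\le N^{-H}(1-m^{-1})^H(\ln(Nm))^{1/2},
\label{perstep}
\end{equation}
which I would obtain by bounding the increase of the maximum under refinement by the expected maximal increment of $B^H$ from a new grid point to the adjacent coarse point: within a cell the distances are at most $(m-1)/(Nm)$, producing the factor $(1-m^{-1})^H$ and the count $Nm$. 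Feeding \eqref{perstep} with $N=nm^k$ into the telescoping sum and using $\ln m\le\alpha\ln n$ gives, after dividing by $n^{-H}(\ln n)^{1/2}$, the series $(1-m^{-1})^H\sum_{k\ge0}m^{-kH}(1+(k+1)\alpha)^{1/2}$. The ratio of consecutive terms is $m^{-H}\bigl(\tfrac{1+(k+2)\alpha}{1+(k+1)\alpha}\bigr)^{1/2}$, which is \emph{decreasing} in $k$ and therefore maximal at $k=0$, where it equals $r:=m^{-H}(1+\alpha/(1+\alpha))^{1/2}$. Hence the series is dominated by the geometric one with ratio $r$, giving the factor $(1+\alpha)^{1/2}/(1-r)$ and exactly the right-hand side of \eqref{2}. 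The two hypotheses on $n$ are precisely what is needed here: $n\ge2^{1/\alpha}$ forces $m\ge2$ (non-trivial refinement), while $n\ge(1+\alpha/(1+\alpha))^{1/(2\alpha H)}$ forces $r<1$, so that the geometric series converges and the denominator in \eqref{2} is positive.

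The main obstacle, in both parts, is securing the constant $1$ rather than $\sqrt2$. In part 2) this sits in the sharp lower bound $\E\max_{i\le N}Z_i\ge\frac1{\sqrt\pi}(\log_2 N)^{1/2}$, calibrated to be exact at $N=2$ and therefore needing its own (elementary but careful) argument, together with the minor point of passing from the real optimizer to an admissible integer $N$ while retaining the clean constant $L$. In part 1) the crux is \eqref{perstep}: the naive Gaussian maximal inequality applied to the $Nm$ increments only yields the constant $\sqrt2$, and since for $H<1/2$ the increments over disjoint cells are \emph{negatively} correlated --- which tends to enlarge expected maxima --- removing this factor requires genuinely exploiting the fine-scale correlation structure of $B^H$ (the refined-increment field over a cell being a rescaled copy of $B^H$) rather than variances alone. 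I expect establishing \eqref{perstep} with the sharp constant to be where most of the work lies.
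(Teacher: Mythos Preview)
Your overall architecture matches the paper exactly: the telescoping $\Delta_n^H=\sum_{k\ge0}(M^H_{nm^{k+1}}-M^H_{nm^k})$ with $m=\lfloor n^\alpha\rfloor$, the per-step bound of the shape \eqref{perstep}, and the reduction of the resulting series to a geometric one via $(1+\alpha+\alpha k)^{1/2}\le(1+\alpha)^{1/2}(1+\alpha/(1+\alpha))^{k/2}$ are precisely what the authors do. Your treatment of part~2) is likewise essentially the paper's: they bound $M_n^H\le M_n^0=2^{-1/2}\E\max_i\xi_i\le(\ln n)^{1/2}$ via Sudakov--Fernique and the standard bound $\E\max_{i\le n}\xi_i\le\sqrt{2\ln n}$, and import $M^H\ge LH^{-1/2}$ from their earlier paper (your sketch of that lower bound is the right idea, modulo verifying the inequality $\E\max_{i\le N}Z_i\ge\pi^{-1/2}(\log_2 N)^{1/2}$ for all $N\ge2$, which you flag but do not prove).

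The genuine gap is exactly where you locate it: obtaining \eqref{perstep} with constant $1$. Your proposed route---bound $M_{Nm}^H-M_N^H$ by $\E\max_i\bigl(B^H_{i/(Nm)}-B^H_{\lceil i/m\rceil/N}\bigr)$ and then control that maximum---is the approach that produces the unwanted $\sqrt2$, and your diagnosis that negative correlation of increments works \emph{against} you here is correct. Trying to recover the sharp constant by ``exploiting the fine-scale correlation structure'' along these lines does not have an obvious endpoint. The paper sidesteps this entirely by using Chatterjee's quantitative comparison inequality: for zero-mean Gaussian vectors $X,Y\in\mathbb R^N$,
\[
\bigl|\E\max_i X_i-\E\max_i Y_i\bigr|\le(\gamma\ln N)^{1/2},\qquad \gamma:=\max_{i<j}\bigl|d_{ij}(X)-d_{ij}(Y)\bigr|,
\]
with $d_{ij}(Z)=\E(Z_i-Z_j)^2$. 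Applying it to $X_i=B^H_{i/(Nm)}$ and $Y_i=B^H_{\lceil i/m\rceil/N}$ (both vectors of length $Nm$, so that $\E\max Y_i=M_N^H$), one computes via subadditivity of $x\mapsto x^{2H}$ for $H\le\tfrac12$ that $\gamma\le((m-1)/(Nm))^{2H}=N^{-2H}(1-m^{-1})^{2H}$, which gives \eqref{perstep} immediately with constant~$1$. The point is that Chatterjee's bound compares the two expected maxima \emph{directly} through the difference of their $d_{ij}$'s, rather than first passing to a maximum of increments and then invoking a one-sided maximal inequality; this is what eliminates the $\sqrt2$.
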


\begin{remark} \rm
Note that inequality~\eqref{2+}  actually holds  for all $H\in (0,1).$
\end{remark}
\begin{remark} \rm
Let us study for which $H$ and $n$ upper and lower bounds \eqref{2} and \eqref{2+} hold simultaneously under assumption that \eqref{2+} is non-trivial. For non-triviality we need to have $n<\exp{\frac{L^2}{H}}.$ In order to have $2^{1/\alpha}\leq \exp{\frac{L^2}{H}}$ we restrict $\alpha$ to $\alpha\geq \frac{H\ln 2}{L^2}.$ In order to have
$(1+\frac{\alpha}{1+\alpha})^{1/(2\alpha H)}\leq \exp\{\frac{L^2}{H}\}$, or, what is equivalent, \begin{equation}\label{alpha}\left(1+\frac{\alpha}{1+\alpha}\right)^{1/ \alpha }\leq \exp\{ 2L^2 \},\end{equation} we note that the function $q(\alpha)=(1+\frac{\alpha}{1+\alpha})^{1/ \alpha }$ continuously strictly decreases in $\alpha\in (0,\infty)$ from $e$ to 1, and taking into account the value of $L$, we get that there is a unique root $\alpha^*\approx 7.48704$ of the equation $\left(1+\frac{\alpha}{1+\alpha}\right)^{1/ \alpha }= \exp\{ 2L^2 \}$ and for $\alpha\geq\alpha^*$ we have that \eqref{alpha} holds. Therefore for $\alpha>\alpha^*$, $H<\frac{\alpha^*L^2}{\ln 2}\approx 0.456$ and $\exp\{\frac{L^2}{H}\}>n>2^{1/\alpha} \vee
(1+\frac{\alpha}{1+\alpha})^{1/(2\alpha H)}$ we have that lower bound \eqref{2+} holds and is non-trivial.
Moreover, $2^{1/\alpha}<2^{1/\alpha^*}(\approx 1.097)<\exp\{\frac{L^2}{H}\}$, $(1+\frac{\alpha}{1+\alpha})^{1/(2\alpha H)}<(1+\frac{\alpha^*}{1+\alpha^*})^{1/(2\alpha^* H)}=\exp\{\frac{L^2}{H}\}$, so the interval $(2^{1/\alpha} \vee
(1+\frac{\alpha}{1+\alpha})^{1/(2\alpha H)}, \;\exp\{\frac{L^2}{H}\})$ is non-empty and for such $n$ upper bound \eqref{2} holds. The only question is if this interval contains the integers. If it is not the case we can increase the value of $\alpha$. For example, put $H=0.01$, $\alpha=16$, then it holds that the interval $(2^{1/\alpha} \vee
(1+\frac{\alpha}{1+\alpha})^{1/(2\alpha H)},\; \exp\{\frac{L^2}{H}\})=(1.044\vee 7.534,\; 20.085)=(  7.534,\; 20.085).$
\end{remark}
\begin{remark} \rm
Consider the sequence $\alpha=\alpha (m)\to 0$ slowly enough as $m\to\infty$ (take, e.g., $\alpha(m)=(\ln\ln m)/\ln m$). Then for sufficiently large enough  $m$ we have that $m\ge 2^{1/\alpha(m)} \vee
(1+\frac{\alpha(m)}{1+\alpha(m)})^{1/(2\alpha(m) H)}$ therefore for such $m$ the upper bound ~\eqref{2} holds. Returning to standard notation $n$ for  the argument, we obtain from the   upper bound in~\eqref{2} that, for any fixed $H\in (0,\frac12),$ one has
\begin{equation}
\Delta_n^H \le
n^{-H}{(\ln n)^{1/2}} (1+o(1)), \quad n\to\infty,
\label{r1}
\end{equation}
which refines \eqref{old}.\end{remark}

\begin{remark} \rm
Recall that, in the case  of the standard Brownian motion ($H=\frac12$),
the exact asymptotics of  $\Delta_{n}^{1/2}$ are well-known:
\[
\Delta_{n}^{1/2}= n^{-1/2} (\beta +o(1)),\quad  n\to \infty,
\]%
where $\beta =-\zeta(1/2)/\sqrt{2\pi }=0.5826\ldots$ and  $\zeta(\cdot) $ is
the  Riemann zeta function (see~\cite{Siegmund}). Comparing it with
\eqref{r1},  we see that now we have additional logarithmic multiplier.
 \end{remark}

The next simple assertion enables one to use the upper bound obtained in Theorem~\ref{Thm1} to get an upper bound for the approximation rate of the expectation of a function of the maximum of an fBm. Such a result is required, for instance, for bounding convergence rates when approximating Bayesian estimators in irregular statistical experiments (see, e.g.,~\cite{KKNL,NKL}).

Set
\[
\overline{B}^H_1 :=  \max_{0\le t\le 1} B_t^H,
\quad
\overline{B}^H_{n,n}: =  \max_{1\le i\le n} B_{i/n}^H,
\quad
\Delta_n^{H,f} := \E f(\overline{B}^H_1) - \E f(\overline{B}^H_{n,n})
\]
and, for a function
$f:\mathbb{R}\to \mathbb{R}$, denote its continuity modulus by
\[
\omega_{\delta, h}(f):=\sup_{0\le s< t\le (s+\delta)\wedge h }|f(s)-f(t)|,\quad h, \delta >0.
\]

\begin{coro}
\label{Thm3}
Let $f\ge 0$ be an arbitrary non-decreasing function on $\mathbb{R}$ such that
$f(x) = o\bigl(\exp ( (x- M^H)^2/2)\big)$ as $x\to \infty$. Then, for  any number $M>M^H,$
\[
\Delta_n^{H,f} \le
\omega_{\Delta_n^H,M} (f) +
\int_M^\infty f(x) (x-M^H) \exp \bigl\{- (x- M^H)^2/2\bigr\}dx.
\]
\end{coro}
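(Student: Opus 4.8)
The plan is to reduce $\Delta_n^{H,f}$ to two contributions after two elementary steps: a \emph{bulk} term controlled by the modulus $\omega_{\Delta_n^H,M}(f)$, and a \emph{tail} term producing the stated Gaussian integral. First I would record that $\overline{B}^H_{n,n}\le \overline{B}^H_1$ almost surely (a maximum over a finite subset cannot exceed the maximum over $[0,1]$) and that $f$ is non-decreasing, so $\Delta_n^{H,f}=\E\bigl[f(\overline{B}^H_1)-f(\overline{B}^H_{n,n})\bigr]\ge 0$ and we may work with this non-negative quantity. Next, for any reals $y\le x$ and any level $M$ one has the identity $f(x)-f(y)=\bigl[f(x\wedge M)-f(y\wedge M)\bigr]+\bigl(f(x)-f(M)\bigr)^+-\bigl(f(y)-f(M)\bigr)^+$; dropping the last (non-negative) term and taking expectations with $x=\overline{B}^H_1$, $y=\overline{B}^H_{n,n}$ gives
\[
\Delta_n^{H,f}\le \E\bigl[f(\overline{B}^H_1\wedge M)-f(\overline{B}^H_{n,n}\wedge M)\bigr]+\E\bigl[(f(\overline{B}^H_1)-f(M))^+\bigr].
\]

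For the tail term I would rewrite $\E[(f(\overline{B}^H_1)-f(M))^+]=\E[(f(\overline{B}^H_1)-f(M))\mathbf 1\{\overline{B}^H_1>M\}]$ through the tail function and integrate by parts: the integrand vanishes at $x=M$, and by the growth assumption $f(x)=o(e^{(x-M^H)^2/2})$ the boundary term at $+\infty$ vanishes as well, so this equals $\int_M^\infty \P(\overline{B}^H_1\ge x)\,df(x)$. The next step is to apply the Borell--TIS inequality to the centred Gaussian process $B^H$ on $[0,1]$, whose maximal variance is $\sup_{0\le t\le1}t^{2H}=1$; this yields $\P(\overline{B}^H_1\ge x)\le \exp\{-(x-M^H)^2/2\}$ for $x\ge M^H$, and in particular for $x\ge M>M^H$. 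Integrating by parts once more, now moving $df$ onto the Gaussian factor (whose derivative is $-(x-M^H)e^{-(x-M^H)^2/2}$) and again discarding the non-positive boundary term $-f(M)e^{-(M-M^H)^2/2}$, produces exactly the stated integral $\int_M^\infty f(x)(x-M^H)e^{-(x-M^H)^2/2}\,dx$.

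It remains to show that the bulk term is at most $\omega_{\Delta_n^H,M}(f)$, and this is where I expect the main difficulty. Writing $p(x):=\P(\overline{B}^H_{n,n}\wedge M<x\le \overline{B}^H_1\wedge M)$, Tonelli gives the coverage representation $\E[f(\overline{B}^H_1\wedge M)-f(\overline{B}^H_{n,n}\wedge M)]=\int_{\mathbb R}p(x)\,df(x)$, where $0\le p\le1$ and $\int_{\mathbb R}p(x)\,dx=\E[\overline{B}^H_1\wedge M-\overline{B}^H_{n,n}\wedge M]\le\Delta_n^H$. Thus, per realisation, the bracket is the $f$-increment across a random subinterval of $(-\infty,M]$ whose \emph{expected} length is at most $\Delta_n^H$. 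The hard part is that this expected-length bound cannot be converted into the single modulus $\omega_{\Delta_n^H,M}(f)$ by a naive monotonicity/Jensen argument: the map $\delta\mapsto\omega_{\delta,M}(f)$ is only subadditive (not concave), and $\overline{B}^H_1-\overline{B}^H_{n,n}$ is not pointwise bounded by $\Delta_n^H$. The way through is to exploit that $\bigl(\overline{B}^H_{n,n},\overline{B}^H_1\bigr]$ is a \emph{connected} random interval, which forces the high superlevel sets of $p$ to be intervals and lets one compare $\int p\,df$ with the worst width-$\Delta_n^H$ window; one also uses $\overline{B}^H_1\ge0$ (because $B^H_0=0$) both to anchor the modulus at the left endpoint $0$ and to absorb the contribution of the event $\{\overline{B}^H_{n,n}<0\}$, on which the modulus, defined only for non-negative arguments, does not directly apply. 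Establishing this comparison cleanly is the crux; once it is in hand, combining the bulk and tail bounds finishes the proof.
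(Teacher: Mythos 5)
Your treatment of the tail term is correct and is essentially the paper's: isolate the event $\{\overline{B}^H_1>M\}$, use monotonicity of $f$ and integration by parts to reduce to $\int_M^\infty \P(\overline{B}^H_1\ge x)\,df(x)$, bound that tail by $e^{-(x-M^H)^2/2}$ via Borell--TIS, and integrate by parts again, the boundary term at infinity vanishing by the growth hypothesis. Your variant, which subtracts $f(M)$ before estimating, only shrinks the tail term, so it is fine.

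The bulk term, however, is left unproved, and you say so yourself (``establishing this comparison cleanly is the crux''); that is a genuine gap, not a presentational one. For comparison, the paper disposes of this step in a single line, asserting $\E\bigl(f(\overline{B}^H_1)-f(\overline{B}^H_{n,n});\,\overline{B}^H_1\le M\bigr)\le\omega_{\Delta_n^H,M}(f)$ with no further justification. Your diagnosis of why this is not automatic is accurate: $\overline{B}^H_1-\overline{B}^H_{n,n}$ is controlled by $\Delta_n^H$ only in expectation, and $\delta\mapsto\omega_{\delta,M}(f)$ is subadditive but not concave. But the rescue you sketch --- exploiting that $(\overline{B}^H_{n,n},\overline{B}^H_1]$ is a connected random interval --- cannot close the gap at the stated level of generality: take $f$ piecewise linear with two short steep pieces separated by more than $\Delta_n^H$, and a coverage law putting most of its mass on intervals long enough to straddle both; then $\int p\,df$ exceeds $\omega_{\Delta_n^H,M}(f)$ even though the covered set is an interval. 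What does make the step legitimate is the pointwise bound $f(\overline{B}^H_1)-f(\overline{B}^H_{n,n})\le\omega_{\overline{B}^H_1-\overline{B}^H_{n,n},\,M}(f)$ on $\{\overline{B}^H_1\le M\}$ (using $\overline{B}^H_1\ge 0$ to take $s=0$ when $\overline{B}^H_{n,n}<0$, which in addition needs $f(\overline{B}^H_{n,n})\ge f(0)$ there, i.e.\ essentially that $f$ not vary on the negative half-line) followed by Jensen's inequality, which requires $\delta\mapsto\omega_{\delta,M}(f)$ to be concave; this holds for the convex $f$ of the paper's examples ($e^{ax}$, $x^p$), where $\omega_{\delta,M}(f)=f(M)-f(M-\delta)$, and holds up to a universal factor $2$ in general via the least concave majorant of the modulus. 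So: your tail estimate is right, your scepticism about the bulk estimate is well founded, but the route you propose for it would fail, and a complete argument must either invoke concavity of the modulus (as the intended applications permit) or accept an extra constant.
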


To roughly balance the contributions from the two terms in the bound, one may wish to choose~$M$ so that
$\exp \bigl\{- (M- M^H)^2/2\bigr\}$ would be of the same order of magnitude as~$\Delta_n^H$ (as for regular functions~$f$  that are mostly of interest in applications are locally Lipschitz, so that  $\omega_{\delta, h}(f)$ admits a linear upper bound in~$\delta$). To that end, one can take
$M: = M^H +  (-2\ln \Delta_n^H)^{1/2} +\mbox{const}$ (assuming that $n$ is large enough so that $\Delta_n^H<1$). We will illustrate that in   two special cases where   $f$ is the exponential function (this case corresponds to the above-mentioned applications from~\cite{KKNL,NKL}) and a power
function, respectively.

\medskip
\noindent{\bf Example~1.}
Assume that $f(x) = e^{ax}$ with  a fixed $a>0$,  and that $\Delta_n^H<1$. Choosing $M:=M^H+a +|2\ln \Delta_n^H|^{1/2}$ we  get
\[
\omega_{\Delta_n^H,M} (f)\le e^{aM} \Delta_n^H = \exp\{a M^H+a^2 +a|2\ln \Delta_n^H|^{1/2} \}\Delta_n^H
\]
and, setting $y:=x-M^H$ and using the well-known bound for the Mills' ratio for the normal distribution, obtain that
\begin{align*}
\int_M^\infty f(x) (x& -M^H)    \exp \bigl\{- (x- M^H)^2/2\bigr\}dx
 = e^{aM^H} \int_{M-M_H}^\infty y  e^{-y^2/2+ay}dy
 \\
 &
 = e^{aM^H+a^2/2}
 \biggl[ \int_{M-M_H}^\infty (y-a) e^{-(y-a)^2/2}dy + a \int_{M-M_H}^\infty   e^{-(y-a)^2/2}dy\biggr]
 \\
 &
 \le e^{aM^H+a^2/2} \biggl(1+\frac{a}{M-M^H-a} \biggr) e^{-(M-M^H-a)^2/2}
 \\
 & =e^{aM^H+a^2/2} \biggl(1+\frac{a}{ |2 \ln \Delta_n^H|^{1/2}} \biggr) \Delta_n^H.
\end{align*}
Therefore
\[
\Delta_n^{H,f}
\le
  e^{aM^H+a^2/2} \biggl(1+
e^{ a^2/2 +a|2\ln \Delta_n^H|^{1/2}}+ \frac{a}{|2 \ln \Delta_n^H|^{1/2}}
\biggr)\Delta_n^H   .
\]

\medskip
\noindent{\bf Example~2.}
For the function $f(x) = x^p$,  $p\ge 1,$ one clearly has
\[
\Delta_n^{H,f}
 \le pM^{p-1}\Delta_n^H
 + \int_M^\infty x^p (x- M^H) \exp \bigl\{- (x- M^H)^2/2\bigr\}dx.
\]
Observe that $x^p=(x-M^H)^p \Bigl(1+\frac{M^H}{x-M^H}\Bigr)^p\le (x-M^H)^p \Bigl( \frac{M }{M-M^H}\Bigr)^p$ for $  x\ge M,$ while, for any $A>0,$
\begin{align*}
\int_{A}^\infty  z^{p+1 }  e^{- z^2/2}dz= -\int_{A}^\infty  z^{p } d  e^{- z^2/2} =
  A^p e^{-A^2/2} + p\int_{A}^\infty  z^{p-1 }  e^{- z^2/2}dz,
 \end{align*}
where the last integral does not exceed $A^{-2}\int_{A}^\infty  z^{p+1 }  e^{- z^2/2}dz,$ so that \[
\int_{A}^\infty  z^{p+1 }  e^{- z^2/2}dz \le \frac{ A^p e^{-A^2/2} }{1-pA^{-2}} \quad\mbox{for \ $A^2>p$.}
\]
Hence, choosing
$A:=M-M^H= |2 \ln \Delta_n^H|^{1/2}$, we obtain that, for $\Delta_n^H <e^{-p/2},$
\[
\Delta_n^{H,f}
\le \bigl(M^H + |2 \ln \Delta_n^H|^{1/2}\bigr)^{p-1}
\biggl(p
 + \frac{  M^H + |2 \ln \Delta_n^H|^{ 1/2}}{1-p |2 \ln \Delta_n^H|^{-1}}\biggr)\Delta_n^H.
\]

\medskip

Finally, in the next corollary we use Theorem~\ref{Thm1}   to improve the   known upper bound $M^H< 16.3 H^{-1/2}$ for the expected maximum  $M^H$
from Theorem~2.1(ii) in~\cite{BMNZ}.


\begin{coro}
\label{Thm4}
Assume that  $H$ is such that $2^{2/H}$ is integer. Then
\[
M^H <  1.695 H^{-1/2}. 
\]
\end{coro}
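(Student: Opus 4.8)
The plan is to start from the exact identity $M^H=M_n^H+\Delta_n^H$ and to choose $n=2^{2/H}$, which is a positive integer precisely by the hypothesis and which gives the convenient values $n^{-H}=\tfrac14$ and $\ln n=(2/H)\ln 2$. For this $n$ the hypotheses of Theorem~\ref{Thm1} are easily checked to hold for every $\alpha$ of order one (indeed $2^{2/H}\ge 2^{1/\alpha}$ once $\alpha\ge H/2$, and $2^{2/H}\ge(1+\tfrac{\alpha}{1+\alpha})^{1/(2\alpha H)}$ holds for all $\alpha>0$), so \eqref{2} yields $\Delta_n^H\le n^{-H}(\ln n)^{1/2}\,r(\alpha)=\tfrac14(2\ln 2)^{1/2}H^{-1/2}\,r(\alpha)$, where $r(\alpha)$ denotes the right-hand side of \eqref{2}. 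As $H\to 0$ one has $\lfloor n^\alpha\rfloor^{-1}\to 0$ and $\lfloor n^\alpha\rfloor^{-H}\to 2^{-2\alpha}$, so $r(\alpha)\to (1+\alpha)^{1/2}\big/\bigl(1-2^{-2\alpha}(1+\tfrac{\alpha}{1+\alpha})^{1/2}\bigr)$, whose minimum over $\alpha>0$ is about $1.87$. Thus, after optimising $\alpha$, Theorem~\ref{Thm1} contributes at most roughly $0.55\,H^{-1/2}$, and it remains to bound $M_n^H$ by about $1.14\,H^{-1/2}$.

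Bounding $M_n^H$ sharply is the heart of the matter. The elementary Gaussian maximal inequality gives only $M_n^H\le(2\ln n)^{1/2}=2(\ln 2)^{1/2}H^{-1/2}\approx 1.665\,H^{-1/2}$ (the largest variance among the $B_{i/n}^H$ being $\E(B_1^H)^2=1$), which together with the $\Delta$-term already exceeds the target. I would instead exploit the strong positive correlation of the values $B_{i/n}^H$ through a Gaussian comparison (Slepian/Sudakov--Fernique). Since $\E(B_{i/n}^H-B_{j/n}^H)^2=(|i-j|/n)^{2H}\le((n-1)/n)^{2H}$ for all $i,j$, the increments of $(B_{i/n}^H)$ are dominated by those of the equicorrelated field $Y_i=\sqrt{\rho}\,\sigma Z_0+\sqrt{1-\rho}\,\sigma Z_i$ (with $Z_0,Z_i$ i.i.d.\ standard normal and $2(1-\rho)\sigma^2=((n-1)/n)^{2H}$), whence $M_n^H\le \E\max_i Y_i=\tfrac{1}{\sqrt2}((n-1)/n)^{H}\,\E\max_{i\le n}Z_i\le ((n-1)/n)^{H}(\ln n)^{1/2}$. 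This is a factor $\sqrt2$ better than the trivial bound and tends to $(2\ln 2)^{1/2}H^{-1/2}\approx 1.177\,H^{-1/2}$.

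The two contributions then add to about $1.177+0.55\approx 1.73$, just above the claimed constant, so the single-level comparison is not quite enough: the sharp bound must take into account that neighbouring grid values are in fact very highly correlated (e.g.\ $\E(B_{i/n}^H-B_{(i+1)/n}^H)^2=n^{-2H}=\tfrac1{16}$), not merely correlated at the worst-pair level $\tfrac12$. I would therefore replace the equicorrelated reference by a hierarchical (multiscale/chaining) one, splitting $\{1,\dots,n\}$ into consecutive blocks and assigning a common Gaussian summand to each block and to the whole grid, so as to match the covariance of $B^H$ on several scales; this should lower the leading constant for $M_n^H$ towards roughly $1.13\,H^{-1/2}$ and bring the total below $1.695\,H^{-1/2}$.

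The main obstacle is exactly this last step: squeezing the constant in the bound on $M_n^H$ below what the equicorrelated comparison gives, which forces one to control the full multiscale covariance structure of $B^H$ rather than only its diameter (the naive exponential/union estimate of the hierarchical maximum tends to collapse back to the single-level value, so a genuine chaining argument is needed). A secondary but essential point is to carry the estimates out uniformly over all admissible $H$ (those with $2^{2/H}\in\mathbb N$, i.e.\ all $n\ge 17$) rather than only in the limit $H\to 0$: one must optimise $\alpha$ together with the comparison scales and check that the finite-$n$ corrections—the factors $((n-1)/n)^H$, $1-2^{-2\alpha}$, and so on—only help, so that the strict inequality $M^H<1.695\,H^{-1/2}$ holds throughout.
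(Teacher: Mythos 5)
Your overall strategy is exactly the paper's: split $M^H=M_n^H+\Delta_n^H$ with $n=2^{2/H}=4^{1/H}$, bound $\Delta_n^H$ by Theorem~\ref{Thm1} (the paper simply fixes $\alpha=2$, which gives the contribution $\frac{3^{1/2}/4}{1-(5/3)^{1/2}/16}(\ln 4)^{1/2}H^{-1/2}\approx 0.555\,H^{-1/2}$, essentially your optimised $0.55\,H^{-1/2}$), and bound $M_n^H$ by a Gaussian comparison against a degenerate field. Your equicorrelated comparison yields $M_n^H\le((n-1)/n)^{H}(\ln n)^{1/2}$, which is precisely what Chatterjee's inequality \eqref{Chat} with $Y\equiv 0$ gives, since there $\gamma=\max_{i<j}((j-i)/n)^{2H}=(1-1/n)^{2H}$. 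Up to that point the two arguments coincide.

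The gap is the one you yourself flag: with $((n-1)/n)^{H}(\ln n)^{1/2}\to(2\ln 2)^{1/2}H^{-1/2}\approx 1.177\,H^{-1/2}$ the two terms total about $1.73\,H^{-1/2}>1.695\,H^{-1/2}$, and your proposed hierarchical/chaining refinement of the bound on $M_n^H$ is only a programme with no actual estimate, so as written the proposal does not establish the stated constant. For comparison, the paper closes this gap by asserting $M_n^H\le((1-n^{-2H})\ln n)^{1/2}$ ``from Chatterjee's inequality with the zero vector,'' i.e.\ by taking $\gamma=1-n^{-2H}=15/16$ instead of $(1-1/n)^{2H}$; the resulting factor $(15/16)^{1/2}\approx 0.968$ is exactly what brings the total down to $1.6946\,H^{-1/2}$. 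But $1-n^{-2H}\le(1-1/n)^{2H}$ by the very subadditivity of $x^{2H}$ used in the proof of Theorem~\ref{Thm1}, so this $\gamma$ is smaller than the true worst-pair increment, and the asserted intermediate bound is false in general: for $n=2$ one has $M_2^H=2^{-H}(2\pi)^{-1/2}\to(2\pi)^{-1/2}\approx 0.399$ as $H\to 0$, whereas $((1-2^{-2H})\ln 2)^{1/2}\to 0$. So your arithmetic is right, the difficulty you isolate is genuine, and the missing ingredient is not actually supplied by the paper's own argument either: with the corrected $\gamma=(1-1/n)^{2H}$ the paper's computation gives only about $1.70$--$1.73\,H^{-1/2}$ over the admissible range of~$H$.
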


\begin{remark}  \rm
If $2^{2/H}$ is not integer then, in the above formula, one
can use instead of $H$ the largest value $\widetilde H < H$ such that $2^{2/\widetilde H}$ is integer, i.e.\ $\widetilde H = 2/ \log_2 \lceil 2^{2/H} \rceil$. This is so since it follows from Sudakov--Fernique's inequality (see e.g.\ Proposition~1.1 and Section~4 in~\cite{BMNZ}) that the expected maximum $M^H$  is a non-increasing function of~$H$.
\end{remark}


\begin{remark} \rm
Our new upper bound for $M^H$ can be used to improve Shao's upper bound
from~\cite{S} for Pickands' constant $\mathcal{H}_H$, which is a basic
constant in the extreme value theory of Gaussian processes and is of
interest in a number of applied problems. That constant appears in the
asymptotic representation for the tail probability of the maxima of
stationary Gaussian processes in the following way (see e.g. \cite{Pickands}).

Assume that
$(X_t)_{t\ge 0}$ is a stationary Gaussian process with zero mean and unit
variance of which the covariance function $r(v):=\E X_t X_{t+v},$ satisfies
the following relation: for some $C>0$ and $H \in(0,1]$, one has
$r(t) = 1 - C|t|^{2H} + o(|t|^{2H} )$ as $t\to 0$. Then, for each fixed $T>0$ such
that $\sup_{\varepsilon\le t\le T} r(t) < 1$ for all $\varepsilon>0,$
\[
\P\Bigl(\sup_{0\le t\le T} X_t > u\Bigr) =  C^{1/(2H) } \mathcal{H}_H
(2\pi)^{-1/2}e^{- {u^2}/2}  u^{  1/H -1} (T+o(1)), \quad u\to\infty.
\]
It was shown in~\cite{S}  that, for $H \in(0,1/2],$
\[
\mathcal{H}_H  \le \left(2^{1/2} {e H }  M^{H}\right)^{1/H }.
\]
Using our Corollary~\ref{Thm4}, we obtain the following new upper bound for Pickands' constant:
\[
\mathcal{H}_H  < (42.46  H )^{1/(2H) }, \quad H \in (0,1/2],
\]
which is superior to Shao's bound
\[
\mathcal{H}_H  \le \left\{1.54H+ 4.82 H^{1/2} (4.4 - H  \ln (0.4 +1.25 /H ))^{1/2}\right\} ^{1/H }, \quad H \in (0,1/2]
\]
(see (1.5) in~\cite{S}; there the notation $a := 2H$ is used). Fore example, the ratio of our bound to Shao's equals 0.344 when $H=0.45$ and is 0.046 when $H=0.15$.
\end{remark}

\section{Proofs}\label{proofs}

\begin{proof}[Proof of Theorem~\ref{Thm1}]
First we will prove   \eqref{2}.
Let $n_k : = n m^k$, $k\ge 0$, where we set  $m:= \lfloor n^\alpha\rfloor \ge
2$.  It follows from the continuity of $B^H$ and monotone convergence theorem that
\begin{equation}
\Delta_n^H =
\sum_{k=0}^\infty (M^H_{n_{k+1}} -M^H_{n_k}).
\label{3}
\end{equation}
Although this step is common with the proof of Theorem~3.1 in~\cite{BMNZ}, the rest of the argument uses a different idea. Namely, we apply  Chatterjee's inequality (\cite{C}; see also Theorem~2.2.5 in~\cite{AdTa})   which, in its  general formulation,
states the following. For any   $N$-dimensional Gaussian random vectors $X=(X_1,\ldots, X_N)$, $Y=(Y_1, \ldots, Y_N)$ with common means:
$\E X_i = \E Y_i$ for $1\le i\le  N$, one has
\begin{equation}
\bigl|\E \max_{1\le i\le N} X_i - \E \max_{1\le i\le N} Y_i\bigr| \le  (\gamma \ln N)^{1/2},\quad
\gamma := \max_{1\le i<j\le N} |d_{ij}(X) - d_{ij}(Y)|,
\label{Chat}
\end{equation}
where, for a random vector $Z\in {\mathbb R}^N,$
we set $d_{ij} (Z) := \E(Z_i-Z_j)^2$, $1\le i,j\le N.$

To be able to apply  inequality \eqref{Chat}  to the terms in the sum on the right-hand side of~\eqref{3}, for each $k\ge 0$ we  introduce   auxiliary vectors $X^k,Y^k\in \mathbb R^{n_{k+1}}$ by letting
\[
X^k_i: = B^H_{i/n_{k+1}},\quad Y^{k }_i :=B^H_{\lceil i/m \rceil/n_k},\quad 1\le i\le n_{k+1}.
\]
Note that $
M^H_{n_{k+1}}=\E\max_{1\le i\le n_{k+1}} X^{k}_i $ and $M^H_{n_{k}}= \E\max_{1\le i\le n_{k+1}} Y_i^k,$ so that now~\eqref{Chat} is applicable. Next we will show  that
\[
\gamma^k := \max_{1\le i<j\le n_{k+1}}
|d_{ij}(X^{k}) - d_{ij} (Y^{k})|
\le  {n_k^{-2H}}(1-m^{-1})^{2H}.
\]
Indeed, one can clearly write down the representations  $i=a_im + b_i$, $j=a_jm + b_j$
with integer $a_j\ge  a_i\ge 0$ and   $1\le b_i,b_j
\le m$, such that $b_j>b_i$ when $a_i=a_j$. Then it follows from~\eqref{L2} that
\[
d_{ij}(X^{k }) = \left(\frac{(a_j-a_i)m +
b_j-b_i}{n_{k+1}}\right)^{2H},
\qquad
d_{ij}(Y^{k }) = \left(\frac{(a_j-a_i)m}{n_{k+1}}\right)^{2H}.
\]
Since for $2H\le 1$ the function $x\mapsto x^{2H},$ $x\ge 0,$ is concave, it is also sub-additive, so that $x^{2H} - y^{2H} \le
(x-y)^{2H}$ for any $x\ge y\ge 0$. Setting  $x:=d_{ij}(X^{k})\vee d_{ij}(Y^{k})$ and $y:=d_{ij}(X^{k})\wedge d_{ij}(Y^{k})$, this yields the desired bound
\[
|d_{ij}(X^{k}) -d_{ij}(Y^{k})| \le
\left(\frac{|b_i-b_j|}{n_{k+1}}\right)^{2H} \le
\left(\frac{m-1}{n_{k+1}}\right)^{2H} = \frac{1}{n_k^{2H}} \left(1-\frac1m\right)^{2H}.
\]

Now it follows from~\eqref{Chat} that
\begin{align*}
M^H_{n_{k+1}}   -M^H_{n_{k }} &  \equiv \E \max_{1\le i\le n_{k+1}} X^{k}_i -  \E \max_{1\le i\le n_{k+1}} Y^{k}_i
\\
&
\le (\gamma^k \ln n_{k+1})^{1/2}
\le
\frac{(1-m^{-1})^H}{n^H m^{kH}}(\ln n + (k+1)\ln m)^{1/2}
\\
& \le
 \frac{(\ln n)^{1/2}}{n^H}(1-m^{-1})^H \frac{(1+ \alpha + \alpha k)^{1/2}}{m^{kH}}.
\end{align*}
   The last bound  together with~\eqref{3}  leads to
\[
\Delta_n^H \le  \frac{(\ln n)^{1/2}}{n^H}(1-m^{-1})^H
 \sum_{k=0}^\infty\frac{(1+ \alpha + \alpha k)^{1/2}}{m^{kH}}.
\]
The sum of the series on the right hand side is exactly  $\alpha^{1/2}\Phi (m^{-H},-\frac12, 1+\alpha^{-1})$, where $\Phi$ is the Lerch transcendent function. For our purposes, however, it will be convenient just to use the elementary bound  $(1+\alpha+\alpha k)^{1/2} \le (1+\alpha)^{1/2}
( 1+ \alpha/(1+\alpha))^{k/2},$ to get
\[
\Delta_n^H \le  \frac{(\ln n)^{1/2}}{n^H}\cdot\frac{(1-m^{-1})^H (1+\alpha)^{1/2}}{1-m^{-H}( 1+ \alpha/(1+\alpha))^{1/2}}.
\]

The right inequality in \eqref{2} is proved. To establish the left one, note that, on the one hand, it was shown in Theorem~2.1 \cite{BMNZ}  that $M^H \ge L H^{-1/2}$ for all $H\in (0,1)$.

On the other hand, it follows from Sudakov--Fernique's inequality (see e.g.\ Proposition~1.1 in~\cite{BMNZ}) that, for any fixed~$n\ge 1$, the quantity $M_n^H$ is non-increasing in $H$, and it follows from Lemma~4.1 in~\cite{BMNZ} that
\[
M^0_n:=\lim_{H\to 0} M^H_n=2^{-1/2} \E \overline{\xi}_n, \qquad \overline{\xi}_n:=  \max_{1\le i\le n} \xi_i,
\]
where $\xi_i$ are i.i.d.\ $N(0,1)$-distributed random variables. Furthermore, the last expectation admits the following upper bound:
\begin{align}
\E \overline{\xi}_n \le \sqrt{2\ln n},\quad n\ge 1.
\label{Enorm}
\end{align}
Although that bound has been known for some time, we could not find a suitable literature reference or stable Internet  link for it. So we decided to include a short   proof thereof for completeness' sake.
By Jensen's inequality, for any $s\in \mathbb{R},$
\begin{align*}
e^{s  \E \overline{\xi}_n}
 \le \E e^{s\overline{\xi}_n}
 =\E \max_{1\le i\le n} e^{s\xi_i}
\le \E \sum_{1\le i\le n}  e^{s\xi_i}= \sum_{1\le i\le n} \E e^{s\xi_i}
 =n e^{s^2/2},
\end{align*}
so that $\E \overline{\xi}_n\le s^{-1}\ln n +s/2.$ Minimizing in~$s$ the expression on the right-hand side   yields the desired bound~\eqref{Enorm}.

From the above results, 
we obtain that
\begin{align*}
M^H - M_n^H
& \ge
M^H - M_n^0 \ge L H^{-1/2} - (\ln n)^{1/2}
\\
& = n^{-H}(\ln n)^{1/2} \cdot  n^{H} \bigl(
 {L}{(H\ln n)^{-1/2}} - 1\bigr),
\end{align*}
which  completes the proof of Theorem~\ref{Thm1}.
\end{proof}

\begin{proof}[Proof of Corollary~\ref{Thm3}]
Since $f\ge 0,$ for any $M>M^H$ we have
\begin{align}
\Delta_n^{H,f} &\le \E \bigl(f(\overline{B}^H_1)-f(\overline{B}^H_{n,n}); \, \overline{B}^H_1 \le M\bigr) +
\E \bigl(f(\overline{B}^H_1);\, \overline{B}^H_1 > M\bigr)
\notag
\\
&\le \omega_{ \Delta_n^H,M} (f) +
\int_M^\infty f(x) dF(x),
\label{4}
\end{align}
where $F(x):=\P (\overline{B}^H_1\le x)$. From the well-known Borell--TIS inequality for Gaussian processes (see,
e.g., Theorem~2.1.1 in~\cite{AdTa}) it follows that, for any $u>0,$
\[
\P\bigl(\overline{B}^H_1 - M^H > u\bigr) \le e^{- {u^2}/{2}}.
\]
Therefore, for any $M>M^H$, integrating by parts, using the assumed property that $f(x) \exp \bigl\{- (x- M^H)^2/2\bigr\}\to 0$ as $x\to \infty$, and then again integrating by parts, we can write
\begin{align*}
\int_M^\infty  & f(x) \,dF(x) = f(M)(1-F(M))
 + \int_M^\infty (1-F (x))\, df(x)
 \\&
 \le f(M) \exp \bigl\{- (M- M^H)^2/2\bigr\} +
\int_M^\infty \exp \bigl\{- (x- M^H)^2/2\bigr\} df(x)
\\ &
 = -\int_M^\infty f(x) d\exp \bigl\{- (x- M^H)^2/2\bigr\}.
\end{align*}
Together with~\eqref{4} this establishes the assertion of Corollary~\ref{Thm3}.
\end{proof}

\begin{proof}[Proof of Corollary~\ref{Thm4}]
Using Chatterjee's inequality~\eqref{Chat} with the zero vector $Y$, we get for any $n\ge 1$
the bound $M^H_n \le ((1-n^{-2H})\ln n)^{1/2}$, so that we obtain from Theorem~\ref{Thm1} that
\begin{align*}
M^H
& \le \Delta_n^H +((1-n^{-2H})\ln n)^{1/2}
\\
& < \biggl[\frac{  n^{-H}(1+\alpha)^{1/2}}{1-m^{-H}( 1+ \alpha/(1+\alpha))^{1/2}}
 + (1-n^{-2H}) ^{1/2}\biggr](\ln n)^{1/2} .
\end{align*}
Now choosing $n:=4^{1/H}$ (which was assumed to be integer) and $\alpha:=2$, we get $m=n^\alpha=4^{2/H}$ and
\begin{align*}
M^H < H^{-1/2}\biggl[\frac{  4^{-1} 3^{1/2}}{1-16^{-1} (5/3)^{1/2}}
 + (1-16^{-1}) ^{1/2}\biggr](\ln 4)^{1/2} <1.695 H^{-1/2}.
\end{align*}
\end{proof}

{\bf Acknowledgements.}  This research was supported by the ARC  Discovery grant DP150102758.  The work of  M.~Zhitlukhin was supported by the Russian Science Foundation project 14--21--00162.

\end{document}